\DeclareMathOperator{\Imm}{Im}
\DeclareMathOperator{\Ker}{Ker}
\DeclareMathOperator{\End}{End}
\DeclareMathOperator{\Hom}{Hom}
\DeclareMathOperator{\Spec}{Spec}
\def\n{\overline n}
\def\c{\overline c}
\def\dualita#1#2{\mathrel{
                 \mathop{\vcenter{
                 \offinterlineskip
                 \hbox to 1.2truecm{$\mapsto$}
                 \hbox to 1.2truecm{$\mapsfrom$}}}%
                 }}
\newtheorem{theorem}{Theorem}[section]
\newtheorem{lemma}[theorem]{Lemma}
\newtheorem{proposition}[theorem]{Proposition}
\newtheorem{corollary}[theorem]{Corollary}
\newtheorem{remark}[theorem]{Remark}
\begin{document}
\title[Indecomposable Injectives]{Indecomposable Injectives  over the Jacobson Algebra}


\author{Riccardo Colpi}
\address{Dipartimento di Matematica ``Tullio Levi-Civita'', Universit\`{a} degli Studi di Padova, I-35121, Padova, Italy}
\email{riccardo.colpi@unipd.it}

\author{Francesca Mantese}
\address{Dipartimento di Informatica, Universit\`{a} degli Studi di Verona, I-37134 Verona, Italy}
\email{francesca.mantese@univr.it}

\author{Alberto Tonolo}
\address{Dipartimento di Matematica ``Tullio Levi-Civita'', Universit\`{a} degli Studi di Padova, I-35121, Padova, Italy - https://orcid.org/0000-0002-9844-3998}
\email{alberto.tonolo@unipd.it}

\thanks{
The authors are supported by Project funded by the EuropeanUnion – NextGenerationEU under the National Recovery and Resilience Plan (NRRP), Mission 4 Component 2 Investment 1.1 - Call PRIN 2022 No. 104 of February 2, 2022 of Italian Ministry of University and Research; Project 2022S97PMY (subject area: PE - Physical Sciences and Engineering) "Structures for Quivers, Algebras and Representations (SQUARE)". They are moreover members of INDAM - GNSAGA}
%
%
%
%
%

%


%
\maketitle

%

\begin{abstract}
Let $K$ be any field. In this paper we give a complete list of the indecomposable left injective module over the Jacobson algebra $K\langle X,Y\mid XY=1\rangle$, i.e., the free associative $K$-algebra on two (noncommuting) generators, modulo the single relation $XY=1$. This is the natural continuation of the paper of the second two authors with Gene Abrams on the charaterization of the injective envelope of the simple modules over $K\langle X,Y\mid XY=1\rangle$.

\footnotesize
Keywords and phrases: Leavitt path algebra; injective envelope; formal power series

MSC 2020 Subject Classifications:    Primary  16S88, Secondary 16S99
\normalsize

\end{abstract}
\section*{Introduction}

This work builds upon the study of injective modules over the Jacobson algebra that the second two authors initiated in a joint paper \cite{AMT21} with  Gene Abrams. This algebra was explicitly studied by Jacobson in the late 1940s in \cite{J40}. In \cite{AMT21}, the characterization of the injective envelopes of the left simple modules was obtained. The aim of this paper is to extend our investigation to include a comprehensive list of all the left indecomposable injectives.

Given an arbitrary field $K$, the Jacobson $K$-algebra $K\langle X,Y\mid XY=1\rangle$ is the free associative $K$-algebra on two (noncommuting) generators, modulo the single relation $XY=1$. The structure of the ring $K\langle X,Y\mid XY=1\rangle$ is quite interesting. Over the past seventy years since Jacobson's work, many mathematicians have extensively examined numerous properties of $K\langle X,Y\mid XY=1\rangle$ in both the ring-theoretic and module-theoretic contexts. These investigations have been conducted in various works, such as Cohn \cite{C66} (1966); Bergman \cite{B74} (1974); Gerritzen \cite{G00} (2000); Bavula \cite{Bav10} (2010); Ara and Rangaswamy
\cite{AR14} (2014); Iovanov and Sistko \cite{IS17} (2017); Lu, Wang and Wang \cite{LWW19} (2019). 

The Jacobson algebra $K\langle X,Y\mid XY=1\rangle$ is isomorphic to the Leavitt path algebra $R:=L_K(\mathcal T)$ associated to the directed graph
$\mathcal T:=\xymatrix{v\ar@(ul,dl)_c
\ar[r]^d&w}$. We give an explicit isomorphism below. For those wishing to delve deeper into the understanding of Leavitt path algebras, we recommend referring to \cite[Chapter 1]{AAM}. However, it is still possible to read this work without an in-depth knowledge of Leavitt path algebras, as it relies on the description of the $K$-algebra $R$ in terms of generators and relations. The description of the Jacobson's algebra as the Leavitt path algebra $R$ associated to the graph $\mathcal T$ allows, in our opinion, a better understanding of its algebraic structure.

In the paper \cite{AMT21}, the injective envelope of each simple left $R$-module (see \cite{AMT24} for a generalization to the Leavitt path algebras associated to graphs with disjoint cycles) was constructed. The 
injective envelopes of the simple left $R$-modules are clearly indecomposable, but not every indecomposable injective possesses a simple submodule. Therefore the list of indecomposable injective left $R$-modules is not necessarily complete. We will prove that only one was missing.

\section{Prerequisites and notation}

Hereinafter, by the unadorned term ``module'', we will always mean ``left module'' whenever working with non-commutative rings.
The Leavitt path algebra $L_K(\mathcal T)$ associated to the directed graph $\mathcal T:=\xymatrix{{\bullet}^v\ar@(ul,dl)[]|c\ar[r]^d&{\bullet}^w}$ is the quotient of the $K$-quiver path algebra  associated to the \emph{extended graph} $\widehat{\mathcal T}$ of $\mathcal T$, pictured as
\[\widehat{\mathcal T}:=\xymatrix{  \hskip-.3in & {\bullet}^v \ar@(l,d)_c \ar@(u,l)_{c^*} 
 \ar@/^-1.0pc/[r]_d 
&   \ar@/^-1.0pc/[l]_{d^*}
  {\bullet}^w}\]
modulo the relations
  \[c^*c=v,\  d^*d=w,\  c^*d=d^*c=0,\  cc^*+dd^*=v.\]
  A $K$-base of $L_K(\mathcal T)$  is
  \[\{v,w,d,d^*,c^i,c^id, c^i(c^*)^j, (c^*)^j, d^*(c^*)^j: i,j\geq 1\}.\]
  A particular isomorphism between the Jacobson algebra $K\langle X,Y\mid XY=1\rangle$ and $L_K(\mathcal T)$  is realised sending $X\mapsto c^*+d^*$ and $Y\mapsto c+d$; the inverse isomorphism is realised sending \label{isoJac} $v\mapsto YX$, $w\mapsto 1-YX$, $c\mapsto Y^2X$, $c^*\mapsto YX^2$, $d\mapsto Y-Y^2X$, $d^*\mapsto X-YX^2$. In the sequel  we will shortly denote by $R$ the Leavitt path algebra $L_K(\mathcal T)$:
  \[R:=L_K(\mathcal T).\]
  
  \medskip

  \noindent The socle $J:=\text{Soc}(R)$ of $R$ is the two sided ideal  $\langle w\rangle$ generated by the idempotent $w$ (\cite[Theorem 2.6.14]{AAM}). It coincides with the following infinite direct sum of  left $R$-modules
\[J=Rw\oplus\bigoplus_{i\geq 0}Rd^*(c^*)^i.\]
\begin{remark}\label{iso:rightprod}
For any $i\geq 0$ the direct summand $Rd^*(c^*)^i$ of $J$ is isomorphic to the direct summand $Rw$ through the right product by $d^*(c^*)^i$:
\[\xymatrix{Rw\ar[rr]^-{-\times d^*(c^*)^i}&&Rd^*(c^*)^i,}\quad rw\mapsto rwd^*(c^*)^i=rd^*(c^*)^i.
\]
\end{remark}
Since $Rw$ is a simple  $R$-module (\cite[Corollary~4.6]{AR14}), the one exhibited above is indeed the decomposition that describes the semisimple nature of the socle $J$. 
\medskip

\begin{remark}\label{rem:leftideals} {As we proved in \cite[Proposition~5.3]{AMT21},  any left ideal of $R$ is either a direct summand of $J$ or a direct summand of a free ideal $Rp(c)$, where $p(x)=1+p_1x+\cdots p_mx^m\in K[x]$ is any polynomial with constant term equal to $1$ and $p(c)=1_R+p_1c+\cdots+p_mc^m$ is the corresponding element in $R$.}
\end{remark}
\medskip

\noindent The $K$-algebra  $R/J$ is isomorphic to the $K$-algebra of Laurent polynomials (\cite[Example 4.5]{R15}). Indeed, setting $\c=c+J$, we have $c^*+J=\c^{-1}$ and $R/J=K[\c,\c^{-1}]$.
%
%

\begin{remark}\label{PIDLAURENT} The ring $K[\c,\c^{-1}]$ of Laurent polynomials {in $\c$} is a principal ideal domain. Its ideals are of the form $K[\c,\c^{-1}]p(\c)$ where $p(\c)$ is the evaluation in $\c$ of any polynomial $p(x)$ in $K[x]$. In particular the spectrum of prime ideals consists of the 0 ideal and all the maximal ideals, i.e., the ideals of the form $K[\c,\c^{-1}]f(\c)$ where $f(x)$ is an irreducible polynomial in $K[x]$ with $f(0)=1$. 
By Matlis' Theorem (see \cite[3.62]{Lam1}) the set \[\{E(K[\c,\c^{-1}]/P):P\in\Spec K[\c,\c^{-1}]\}\] is a complete list of indecomposable injective $K[\c,\c^{-1}]$-modules. If  $P=0$, then $E(K[\c,\c^{-1}])$ is the quotient field $K(\c)$. Any non-zero prime ideal in $K[\c,\c^{-1}]$ has the form $P=\langle f(\c)\rangle$ where $f(x)$ is an irreducible polynomial in $K[x]$ with $f(0)=1$. The injective envelope 
$E(K[\c,\c^{-1}]/P)$ {coincides with} the $f(\c)$-torsion submodule of $K(\c)/K[\c,\c^{-1}]$, i.e., $K(\c)/K[\c,\c^{-1}]S_{f(\c)}^{-1}$ where $S_{f(\c)}$ denotes the multiplicative set $K[\c,\c^{-1}]\setminus \langle f(\c)\rangle$ (see \cite[3.63]{Lam1}). The module $E(K[\c,\c^{-1}]/\langle f(\c)\rangle)$ has the filtration
\[(0)<K[\c,\c^{-1}]f(\c)^{-1}/K[\c,\c^{-1}]< K[\c,\c^{-1}]f(\c)^{-2}/K[\c,\c^{-1}]<...\]
with each filtration factor isomorphic to $K[\c,\c^{-1}]/K[\c,\c^{-1}]f(\c)$ and hence it is isomorphic to
\[\bigcup_{i\geq 1}K[\c,\c^{-1}]/K[\c,\c^{-1}] f(\c)^i=\varinjlim_i K[\c,\c^{-1}]/K[\c,\c^{-1}]f(\c)^i.\]
\end{remark}

\medskip

\noindent In the following lemma we stress the connection between injective modules over $R$ and over the quotient ring $R/J=K[\c,\c^{-1}]$. 

\begin{lemma}\label{injsuRsuRJ}   If $E$ is an injective  $R$-module and $JE=0$, then $E$ is an injective  $R/J$-module. Conversely,   If $E'$ is an injective $R/J$-module, then it is an injective $R$-module.
\end{lemma}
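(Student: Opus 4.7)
The plan is to use Baer's criterion for the converse direction, leveraging the classification of left ideals from Remark \ref{rem:leftideals} together with the PID structure of $R/J$.

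For the first implication, assume $E$ is $R$-injective with $JE=0$, so $E$ is naturally an $R/J$-module. Given any inclusion $A\hookrightarrow B$ of $R/J$-modules and an $R/J$-map $\phi\colon A\to E$, view $\phi$ as $R$-linear; $R$-injectivity produces an $R$-linear extension $\psi\colon B\to E$, which is automatically $R/J$-linear since $J$ annihilates both $B$ and $E$.

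For the converse, let $E'$ be $R/J$-injective (so $JE'=0$) and fix an $R$-linear map $f\colon L\to E'$ on a left ideal $L$ of $R$; the goal is to extend $f$ to $R\to E'$. By Remark \ref{rem:leftideals} two cases arise. If $L$ is a direct summand of $J$, then $L$ is semisimple with all simple summands isomorphic to $Rw$ (Remark \ref{iso:rightprod}); on each such summand $S$ the composition $\varphi\colon Rw\xrightarrow{\sim}S\hookrightarrow L\xrightarrow{f}E'$ satisfies $\varphi(w)=\varphi(w\cdot w)=w\,\varphi(w)\in JE'=0$, so $\varphi=0$. Hence $f=0$ and extends by zero. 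If $L$ is a direct summand of $Rp(c)$ with $p(c)=1+p_1c+\cdots+p_mc^m$, I first extend $f$ by zero on a complement to reduce to $L=Rp(c)$; setting $\xi:=f(p(c))$, any $\eta\in E'$ with $p(c)\eta=\xi$ yields the $R$-linear extension $r\mapsto r\eta$, since $g(rp(c))=r\,p(c)\eta=r\xi=f(rp(c))$.

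Producing such $\eta$ is the delicate point. Because $JE'=0$, the $R$-action of $p(c)$ on $E'$ factors through $R/J$ and coincides with multiplication by the nonzero element $p(\c)\in K[\c,\c^{-1}]$. Since $K[\c,\c^{-1}]$ is a PID (Remark \ref{PIDLAURENT}) and $E'$ is injective over it, $E'$ is divisible, so multiplication by $p(\c)$ is surjective on $E'$ and the required $\eta$ exists. The main substance of the argument is thus this combination: the classification of left ideals reduces the problem to a principal ideal $Rp(c)$, and the PID character of $R/J$ translates $R/J$-injectivity into the divisibility needed to invert $p(\c)$.
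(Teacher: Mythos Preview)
Your proof is correct and follows essentially the same strategy as the paper: Baer's criterion combined with the classification of left ideals from Remark~\ref{rem:leftideals}, treating summands of $J$ and of $Rp(c)$ separately. Two minor differences are worth noting. For the first implication, the paper invokes the general fact $\Hom_R(R/J,E)\cong E$ from \cite[Corollary~3.6B]{Lam1}, whereas you give the direct elementary lifting argument; both are fine. For the $Rp(c)$ case, the paper first proves $Jp(c)=J$ (so $J\subseteq Rp(c)$), factors the map through $Rp(c)/J\hookrightarrow R/J$, and then applies $R/J$-injectivity; you instead observe that the action of $p(c)$ on $E'$ is multiplication by $p(\c)$ and invoke divisibility of injectives over the PID $K[\c,\c^{-1}]$. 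Your route is slightly slicker in that it bypasses the explicit verification that $Jp(c)=J$, while the paper's route makes more visible the structural fact that $Rp(c)/J$ is an ideal of $R/J$; but these are cosmetic variations on the same idea.
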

\begin{proof}  By \cite[Corollary 3.6B]{Lam1}, if $E$ is an injective  $R$-module, then $\Hom_R(R/J, E)$ is an injective  $R/J$-module; in particular if $E$ is injective and $JE=0$, then $E\cong \Hom_R(R,E)\cong \Hom_R(R/J,E)$ is an injective  $R/J$-module. This implication is true for any associative ring and any two-sided ideal.\\
Conversely, assume $E'$ to be  an injective $R/J$-module. Then $E'$ is also a $R$-module with $J\cdot E'=0$.
Reminding the structure of the left ideals  of $R$ recalled in Remark~\ref{rem:leftideals}, let us check that any  morphism of  $R$-modules of type   $\psi:J\to E'$ or of type $\varphi_p:Rp(c)\to E'$, for  $p(x)\in K[x]$,  lifts to  a morphism from $R$ to $E'$. 
Since $JE'=0$ we get $\psi(w)=w\psi(w)=0$ and  $\psi(d^*(c^*)^i)=w\psi(d^*(c^*)^i)=0$: hence $\psi=0$. Then $\psi$ trivially lifts to the zero map  from $R$  to  $E'$. Let now $p(x)\in K[x]$  be any polynomial with $p(0)=1$.  Since 
\begin{align*}
&Rw=Rwp(c)\leq Jp(c),\\
&Rd^*=Rd^*p(c)\leq Jp(c),\\
&Rd^*c^*\leq Rd^*+Rd^*c^*p(c)\leq Jp(c), ...,\text{  and}\\
&Rd^*(c^*)^i \leq Rd^*+Rd^*c^*+\cdots+Rd^*(c^*)^{i-1}+Rd^*(c^*)^ip(c)\leq Jp(c)
\end{align*}
for each $i\geq 0$,
  we get  $Jp(c)=J$. As $JE'=0$, it follows that any morphism  $\varphi_p:Rp(c)\to E'$ vanishes on $J$ and hence it defines a morphism of $R/J$-modules $\varphi'_p:Rp(c)/J\to E'$ which lifts to a morphism of $R/J$-modules $\overline{\varphi'_p}:R/J\to E'$. Setting $\overline{\varphi_p}(1_R):=\overline{\varphi'_p}(1_R+J)$ we define a morphism of  $R$-modules extending $\varphi_p$.
\end{proof}

\begin{remark}\label{rem:K(C)}
As observed in Remark~\ref{PIDLAURENT}, 
\[K(\overline c)\quad\text{and}\quad \varinjlim_i K[\overline c, {\overline c}^{-1}]/K[\overline c, {\overline c}^{-1}]f(\overline c)\]
where $f(x)$ is any irreducible polynomial in $K[x]$ with $f(0)=1$, is a complete list of indecomposable injective $K[\overline c, {\overline c}^{-1}]$-modules. By Lemma~\ref{injsuRsuRJ} these are also injective left $R$-modules. Moreover, since the lattices of left $R$- and $R/J$-submodules is the same for any $R/J$-module, they are also indecomposable left $R$-modules. In the sequel we will prove that these together with the injective envelope $E(Rw)$ of the simple $Rw$ form a complete list up to isomorphisms of the indecomposable injective left $R$-modules.
\end{remark}
\medskip

\noindent By \cite[Corollary 4.6]{AR14} a complete list of  the non-isomorphic simple left $R$-modules is given by the left ideal $Rw$ and the  $R$-modules $R/Rf(c)$, where $f(x)$ varies among  irreducible polynomials in $K[x]$ with fixed non-zero constant term, which we assume to be equal to 1.

\medskip

The injective envelope of the simple module $R/Rf(c)$  is the Pr\"ufer  module \cite[\text{Corollary 6.3}]{AMT21}
\[U^f=\varinjlim_n R/Rf^n(c).\]

\noindent Since $Jf(c)=J$, the $R$-module $R/Rf(c)$ is also a $R/J=K[\c,\c^{-1}]$-module and 
\begin{align*}
R/Rf(c)\cong (R/J)/(Rf(c)/J)&=(R/J)/(R/J)f(c)\\
&=K[\c,\c^{-1}]/K[\c,\c^{-1}] f(\c).\end{align*}

By the same argument, also  $U^f$ is a \ $R/J=K[\c,\c^{-1}]$-module and we get as $K[\c,\c^{-1}]$-modules
\begin{align*}
U^f&=\varinjlim_n R/Rf^n(c)\cong \varinjlim_n (R/J)/(R/J)f^n(\c)\\
&=\varinjlim_n K[\c,\c^{-1}]/ K[\c, \c^{-1}]f^n(\c) \end{align*}
which is the injective indecomposable $K[\c,\c^{-1}]$-module associated to $f(x)$ 
described in Remark~\ref{PIDLAURENT}. 

\medskip
The left ideal $Rw$ is clearly a $K$-vector space containing the subspaces $Kw$, $Kd$, $Kcd$, $Kc^2d$, \dots. We denote by $[c](Kd)$ the sum of the subspaces $Kc^id$, $i\geq 0$, to emphasise the fact that this is the subspace of polynomials in $c$ with right coefficients in $Kd$. We denote by 
\[[[c]](Kd):=\{\sum_{i\geq 0}k_ic^id\mid k_i\in K\}\]
the $K$-vector space of formal series in $c$ with right coefficients in $Kd$.

\noindent The injective envelope of $Rw$ is equal to the  $R$-module of ``formal series'' \cite[Corollary 6.12]{AMT21}
\[E(Rw)=\{k_{-1}w+\sum_{i\geq 0}k_ic^id\mid k_i\in K\}=:Kw+[[c]](Kd).\] 

\medskip

\noindent Since $E(Rw)=Kw+[[c]](Kd)$ and the Pr\"ufer  $R$-modules $U^f=\varinjlim_n R/Rf^n(c)$ are  injective envelopes of simple modules, they are indecomposable. Are there other indecomposable injective $R$-modules?

\noindent Any Leavitt path algebra, so in particular $R$, is (left and right) hereditary \cite[Theorem~3.7]{AG12}, and thus the quotient of any injective left module is again injective.  

\begin{lemma}\label{lemma:imaggini}
Let $S$ be any left hereditary ring, e.g., $S=R$. Every indecomposable injective left $S$-module is an homomorphic image of the injective envelope $E({}_SS)$ of the left regular module.
\end{lemma}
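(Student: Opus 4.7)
The plan is to exploit indecomposability of $E$ together with the hereditary hypothesis, using that over a left hereditary ring, quotients of injectives are again injective.

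First I would pick any nonzero element $x\in E$ and use it to define an $S$-linear map $f: {}_SS\to E$ by $s\mapsto sx$. Viewing ${}_SS$ as a submodule of its injective envelope $E({}_SS)$, the injectivity of $E$ lets us extend $f$ to a morphism $\hat f:E({}_SS)\to E$ with $\hat f(1_S)=x\neq 0$. The image $\Imm \hat f$ is therefore a nonzero submodule of $E$.

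Now I would invoke the hereditary assumption: by \cite[Theorem~3.7]{AG12} (for $S=R$, or in general by the characterization of hereditary rings via the injectivity of quotients of injectives, see for instance \cite[Corollary~3.31]{Lam1}), any quotient of an injective left $S$-module is injective. Hence $\Imm\hat f\cong E({}_SS)/\Ker\hat f$ is itself injective, and being an injective submodule of $E$ it splits off as a direct summand. Since $E$ is indecomposable and $\Imm \hat f\neq 0$, we must have $\Imm \hat f=E$, so $\hat f$ provides the desired surjection $E({}_SS)\twoheadrightarrow E$.

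There is no real obstacle here; the only subtlety worth flagging is the use of the hereditary hypothesis, which is exactly what turns the submodule $\Imm \hat f$ into a summand and lets indecomposability finish the argument. Without heredity, the map $\hat f$ would still exist but its image could fail to be injective, so the argument would collapse.
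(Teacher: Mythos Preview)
Your proof is correct and follows essentially the same approach as the paper's: pick a nonzero morphism $S\to E$, extend it to $E({}_SS)\to E$ by injectivity of $E$, observe that the image is a nonzero injective submodule (via heredity), and conclude by indecomposability. The paper's version is simply more terse, omitting the explicit justification of why $\Imm\hat f$ is injective and why it must then equal $E$, both of which you spell out.
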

\begin{proof}
Let $E$ be an  indecomposable injective left $S$-module and let $\varphi:S\to E$ be a non-trivial morphism. Then $\varphi$ extends to a morphism $\tilde{\varphi}: E(S)\to E$ and $\Imm\tilde{\varphi}$ is a non-zero injective submodule of $E$. Since $E$ is indecomposable we get $\Imm\tilde{\varphi}=E$.
\end{proof}

\medskip

\noindent In the next section we study carefully  the structure of the $K$-algebra $R$ in order to subsequently investigate the injective envelope of the left regular $R$-module and its homomorphic images. This will permit us to obtain our aim.




\section{The injective envelope of the regular module}

Since $v$ and $w$ are orthogonal idempotents and $1=v+w=cc^*+dd^*+w$, we get that 
\[R=Rv\oplus Rw=Rcc^*\oplus Rdd^*\oplus Rw=Rc^*\oplus Rd^*\oplus Rw.\]
As observed in the discussion prior to Remark~\ref{iso:rightprod}, the left ideals $Rd^*$ and $Rw$ are isomorphic simple left $R$-modules (see \cite[Corollary 4.6]{AR14}).  Notice that, for the element $cc^*$, we have the following decomposition as sum of orthogonal idempotents
\begin{align*}
cc^*&=c^2(c^*)^2+\big(c(1-cc^*)c^*\big)=c^2(c^*)^2+(cdd^*c^*),\\ c^2(c^*)^2&=c^3(c^*)^3+\big(c^2dd^*(c^*)^2\big),\\
 \dots\end{align*}
which leads to the corresponding decompositions 
\[Rcc^*=Rc^*=R(c^*)^2\oplus Rd^*c^*=\big(R(c^*)^3\oplus Rd^*(c^*)^2\big)\oplus Rd^*c^*=
\cdots\]
Thus, for any $n\geq 2$ we have 
\[Rc^*=R(c^*)^n\oplus\bigoplus_{i=1}^{n-1}Rd^*(c^*)^i.
\]
All  the  direct summands $Rd^*(c^*)^i$ are simple since they are isomorphic to  $Rw$, as we remarked in the previous section. The direct summand $R(c^*)^n$ is isomorphic to $Rc^*$ through the right multiplication by $(c^*)^{n-1}$ for each $n\geq 2$.

%

Passing to the injective envelopes,  we  have 
\[E(R)=E(Rc^*)\oplus E(Rd^*)\oplus E(Rw)\cong E(Rc^*)\oplus E(Rw)\oplus E(Rw).\]
 While the module $E(Rw)$ is an indecomposable injective, the module $E(Rc^*)$ is clearly not indecomposable since for any $n\geq 2$ we have 
\[E(Rc^*)= E(R(c^*)^n)\oplus \bigoplus_{i=1}^{n-1}E(Rd^*(c^*)^i)
.
\] 
\begin{remark}\label{rem:directsum}
As a result, $E(Rc^*)$ contains the infinite direct sum $\bigoplus_{j\geq 1}E(Rd^*(c^*)^j)$ of indecomposable injectives, all isomorphic to $E(Rw)$.  The isomorphism between  $Rw$ and $Rd^*(c^*)^j$, given by the right multiplication by $d^*(c^*)^j$, extends to an isomorphism between $E(Rw)$ and $E(Rd^*(c^*)^j)$. Thus, since $E(Rw)=Kw+[[c]](Kd)$, we have 
\[E(Rd^*(c^*)^j)=Kd^*(c^*)^j+[[c]](Kdd^*(c^*)^j)\] and  hence
\begin{align*}\bigoplus_{j\geq 1}E(Rd^*(c^*)^j)&=\big(Kd^*c^*+[[c]](Kdd^*c^*)\big)[c^*]\\
&=\big(Kd^*c^*+[[c]](K(v-cc^*)c^*)\big)[c^*].
\end{align*}
It is easy to check that the latter is not an injective module: indeed the embeddings
\[Rd^*(c^*)^i\hookrightarrow E(Rd^*(c^*)^i)\hookrightarrow \bigoplus_{j\geq 1}E(Rd^*(c^*)^j), \quad i\geq 1,\]
define a morphism $R\geq \bigoplus_{j\geq 1}Rd^*(c^*)^i\to \bigoplus_{j\geq 1} E(Rd^*(c^*)^j)$ which does not extend to $R$.
\end{remark}
 

\medskip

\noindent Let us study the  $R$-module $E(Rc^*)$. A $K$-base for $Rc^*$ is given by the elements $c^i(c^*)^{j}$ and $d^* (c^*)^{j}$, where $i\geq 0, j\geq 1$.
The set
\[d^*c^*K[c^*]+[[c]](c^*K[c^*])=\{d^*c^*q_{-1}(c^*)+\sum_{i\geq 0}c^ic^*q_i(c^*)\mid q_i(x)\in K[x]\}\]
is clearly a $K$-vector space; in the next lemma we show it has also a natural structure of left $R$-module.
\begin{proposition}\label{prop:modulo}
The $K$-vector space $d^*c^*K[c^*]+[[c]](c^*K[c^*])$ is a left $R$-module.
\end{proposition}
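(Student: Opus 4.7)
The plan is to define left multiplication of each Leavitt-path-algebra generator $v, w, c, c^*, d, d^*$ on a typical element $\alpha = d^*c^*q_{-1}(c^*) + \sum_{i\geq 0} c^i c^* q_i(c^*) \in W$, verify that each product still lies in $W$, and then check that the defining relations of $R$ are respected. Since these six elements generate $R$ as a $K$-algebra, closure under the corresponding six actions will be enough.

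First I will carry out the six term-by-term computations, using $c^*c = v$, $d^*d = w$, $c^*d = d^*c = 0$, $cc^* + dd^* = v$ together with the source-range identities for $v, w, c, c^*, d, d^*$, which jointly imply $cd^* = dc^* = c^*d^* = dc = (d^*)^2 = vd^* = 0$. I expect to obtain: $v\alpha = \sum_{i\geq 0} c^i c^* q_i(c^*)$ (picking out the series part, since $vd^* = 0$); $w\alpha = d^*c^* q_{-1}(c^*)$ (picking out the $d^*$-part); $c\alpha = \sum_{i\geq 0} c^{i+1} c^* q_i(c^*)$; $c^*\alpha = \sum_{i\geq 0} c^i c^* r_i(c^*)$ with $r_0(x) = xq_0(x) + q_1(x)$ and $r_i(x) = q_{i+1}(x)$ for $i \geq 1$; $d\alpha = c^*q_{-1}(c^*) - c(c^*)^2 q_{-1}(c^*)$ (via $dd^* = v - cc^*$); and $d^*\alpha = d^*c^* q_0(c^*)$. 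In each case the output is a row-finite formal series in $c$ with coefficients in $c^*K[c^*]$, plus possibly a polynomial in $d^*c^*$, hence lies in $W$.

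Next I will extend these six endomorphisms to a left $R$-action by $K$-linearity and check that the Leavitt relations hold as identities of endomorphisms of $W$. The hard part is exactly the well-definedness issue: left multiplication by a generic element of $R$ on a formal series must be unambiguous, with no infinite collisions at any single basis element. The natural remedy is a density-and-continuity argument. The submodule $Rc^* \subseteq W$ is a genuine left $R$-module (via left multiplication inside $R$), and it is dense in $W$ for the coefficient-wise topology on the basis $\{c^i(c^*)^{j+1}, d^*(c^*)^{j+1}\}_{i,j\geq 0}$, since every formal series is the coefficient-wise limit of its partial sums. The six formulas above make clear that each generator acts continuously in this topology, as every output coefficient depends on only finitely many input coefficients; iterating, the same locality holds for any element of $R$. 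Therefore every Leavitt relation, which trivially holds on $Rc^*$, extends by continuity to all of $W$, giving $W$ the desired left $R$-module structure.
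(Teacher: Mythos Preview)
Your argument is correct and shares its skeleton with the paper's proof: both define six $K$-linear endomorphisms of $W$ corresponding to the generators $v,w,c,c^*,d,d^*$ (your explicit formulas agree with the paper's $P_v,\dots,P_{d^*}$), and both reduce the statement to checking that these endomorphisms satisfy the defining relations of $R$. Where you diverge is in that last step. The paper verifies the relations by direct computation on a generic element (it writes out $P_{c^*}\circ P_c=P_v$, $P_v\circ P_w=0$, and $P_v+P_w=\mathrm{id}$ explicitly and leaves the rest to the reader). You instead observe that the six maps restrict to honest left multiplication on $Rc^*\subseteq W$, that each map is continuous for the coefficient-wise topology (each output coefficient depends on finitely many input coefficients), and that $Rc^*$ is dense in $W$; hence every relation, valid on $Rc^*$ tautologically, extends to all of $W$. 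This density/continuity route is a genuine alternative: it trades a somewhat tedious case-by-case check for a single structural observation, and it makes transparent \emph{why} the relations must hold (the module is a completion of the left ideal $Rc^*$). The paper's approach, by contrast, is entirely self-contained and requires no topological language. One small remark: to make your argument airtight you should say explicitly that your six formulas coincide with left multiplication in $R$ when applied to elements of $Rc^*$ (you use this implicitly, and your term-by-term derivations essentially show it, but it is the hinge of the whole argument).
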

\begin{proof}
Let us denote by $\Theta$ the $K$-vector space $d^*c^*K[c^*]+[[c]](c^*K[c^*])$.
The elements $v$, $w$, $c$, $c^*$, $d$, and $d^*$ form a system of generators for the $K$-algebra $R$. Let us denote by $P_v$, $P_w$, $P_c$, $P_{c^*}$, $P_d$, and $P_{d^*}$ the following maps of the $K$-vector space $\Theta$ in itself:
\begin{align*}
\label{}
 P_v\left(d^*c^*q_{-1}(c^*)+\sum_{i\geq 0}c^ic^*q_i(c^*)\right)   &=\sum_{i\geq 0}c^ic^*q_i(c^*))   \\
 P_w\left(d^*c^*q_{-1}(c^*)+\sum_{i\geq 0}c^ic^*q_i(c^*)\right)   &=d^*c^*q_{-1}(c^*)\\
 P_c\left(d^*c^*q_{-1}(c^*)+\sum_{i\geq 0}c^ic^*q_i(c^*)\right)   &=\sum_{i\geq 0}c^{i+1}c^*q_i(c^*))   \\
 P_{c^*}\left(d^*c^*q_{-1}(c^*)+\sum_{i\geq 0}c^ic^*q_i(c^*)\right)   &=c^*(c^*q_0(c^*))+\sum_{i\geq 1}c^{i-1}c^*q_i(c^*))   \\
 P_d\left(d^*c^*q_{-1}(c^*)+\sum_{i\geq 0}c^ic^*q_i(c^*)\right)   &=dd^*c^*q_{-1}(c^*)=(v-cc^*)c^*q_{-1}(c^*)\\
 &=c^*q_{-1}(c^*)-c(c^* q_{-1}(c^*)) \\
 P_{d^*}\left(d^*c^*q_{-1}(c^*)+\sum_{i\geq 0}c^ic^*q_i(c^*)\right)   &=d^*c^*q_{0}(c^*).
 \end{align*}
 It is easy to see that the above maps are $K$-linear maps. Let us see that setting $v\mapsto P_v$, $w\mapsto P_w$, $c\mapsto P_c$, $c^*\mapsto P_{c^*}$, $d\mapsto P_d$, and $d^*\mapsto P_{d^*}$, we define a ring homomorphism between $R$ and the ring $\End_K \Theta$ of  the left endomorphisms \cite[Ch. II, \S 2]{AF92} of the $K$-vector space $\Theta$. As a consequence, $\Theta$ results to be a left $R$-module. 
 The ring $R$ is the free associative $K$-algebra generated by  $v$, $w$, $c$, $c^*$, $d$, $d^*$ subject to the following relations:
 \[vw=0=wv,\ v^2=v,\ w^2=w,\ vc=cv=c, \ vc^*=c^*v=c^*,\]
 \[vd=d=dw, \ wd^*=d^*=d^*v, \ \text{and}
 \]
 \[c^*c=v,\  d^*d=w,\  c^*d=d^*c=0,\  cc^*+dd^*=v.
 \]
 To prove that we have a well defined ring homomorphism, it is necessary to verify that the endomorphisms $P_v$, $P_w$, $P_c$, $P_{c^*}$, $P_d$, and $P_{d^*}$ satisfy the corresponding relations and that $1_R\mapsto 1_{\End_K\Theta}$.

We verify explicitly that $P_v=P_{c^*}\circ P_c$ and $P_v\circ P_w=0$; analogously the other relations can be proven.
\begin{align*}
(P_{c^*}\circ P_c)\left(d^*c^*q_{-1}(c^*)+\sum_{i\geq 0}c^ic^*q_i(c^*)\right)&=P_{c^*}\left(\sum_{i\geq 0}c^{i+1}c^*q_i(c^*)\right)\\
&=P_{c^*}\left(\sum_{i\geq 1}c^{i}c^*q'_{i}(c^*)\right),
\end{align*}
where $q'_i(c^*)=q_{i-1}(c^*)$,
\begin{align*}
{\phantom{(P_{c^*}\circ P_c)\left(d^*c^*q_{-1}(c^*)+\right)}}&=\sum_{i\geq 1}c^{i-1}c^*q'_{i}(c^*)\\
&=\sum_{i\geq 1}c^{i-1}c^*q_{i-1}(c^*)\\
&=\sum_{i\geq 0}c^{i}c^*q_{i}(c^*)\\
&=P_v\left(d^*c^*q_{-1}(c^*)+\sum_{i\geq 0}c^ic^*q_i(c^*)\right);
\end{align*}

\begin{align*}
(P_{v}\circ P_w)\left(d^*c^*q_{-1}(c^*)+\sum_{i\geq 0}c^ic^*q_i(c^*)\right)&=P_{v}\left(d^*c^*q_{-1}(c^*)\right)\\
&=0.
\end{align*}

Finally, let us check that $1_R=v+w\mapsto 1_{\End_K\Theta}$:
 \[(P_v+P_w)\left(d^*c^*q_{-1}(c^*)+\sum_{i\geq 0}c^ic^*q_i(c^*)\right)=\sum_{i\geq 0}c^ic^*q_i(c^*))+d^*c^*q_{-1}(c^*).\]
\end{proof}

\begin{remark}
Observe that, since $dd^*=v-cc^*$, $([[c]]Kdd^*c^*)[c^*]$ is a $K$-vector subspace of  the space $([[c]]Kc^*)[c^*]$ of polynomials in $c^*$ with coefficients in the $K$-vector space $[[c]]c^*K$ of formal series in $c$ multiplied by $c^*$. Moreover, the space  $([[c]]Kc^*)[c^*]$ is strictly contained in the vector space $[[c]](c^*K[c^*])$ of formal series in $c$ with coefficients in the $K$-vector space $c^*K[c^*]$ of polynomials in  $c^*$ multiplied by $c^*$.
\end{remark}

We can now prove that
\begin{theorem}
The  $R$-module $d^*c^*K[c^*]+[[c]](c^*K[c^*])$ of Proposition~\ref{prop:modulo} is the injective envelope $E(Rc^*)$ of $Rc^*$.
\end{theorem}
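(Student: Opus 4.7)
The plan is to establish three properties that together give $\Theta = E(Rc^*)$: (a) $Rc^*$ embeds into $\Theta$ as an $R$-submodule, (b) $Rc^*$ is essential in $\Theta$, and (c) $\Theta$ is injective.

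For (a), each $K$-basis element of $Rc^*$ lies in $\Theta$: the element $c^i(c^*)^j$ corresponds to $c^i c^* \cdot (c^*)^{j-1}$ (that is, $q_i(x)=x^{j-1}$ with the other $q_k$ and $q_{-1}$ zero), and $d^*(c^*)^j$ corresponds to $d^*c^*\cdot(c^*)^{j-1}$. Proposition~\ref{prop:modulo} ensures the $P_\bullet$-action extends left multiplication, and checking this on the single generator $c^*$ (via $P_v(c^*)=c^*$, $P_c(c^*)=cc^*$, $P_{c^*}(c^*)=(c^*)^2$, $P_{d^*}(c^*)=d^*c^*$, and $P_w(c^*)=P_d(c^*)=0$) suffices by the ring-homomorphism property. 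For (b), take a nonzero $\theta = d^*c^* q_{-1}(c^*) + \sum_{i \geq 0} c^i c^* q_i(c^*) \in \Theta$. If $q_{-1} \neq 0$, then $w\theta = d^*c^* q_{-1}(c^*)$ is a nonzero polynomial in $Rc^*$. Otherwise let $i_0$ be the least index with $q_{i_0}\neq 0$; using $(c^*)^{i_0} c^i = c^{i-i_0}$ for $i \geq i_0$ together with $d^* c^n = 0$ for $n \geq 1$, one finds $d^*(c^*)^{i_0}\theta = d^*c^* q_{i_0}(c^*)$, again a nonzero element of $Rc^*$.

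The main work is in (c). I would apply Baer's criterion using Remark~\ref{rem:leftideals}: every left ideal of $R$ is a direct summand of either $J$ or of some $Rp(c)$ (with $p(0)=1$); since any homomorphism from a direct summand extends by zero to the ambient ideal, it suffices to extend $\varphi: J \to \Theta$ and $\varphi: Rp(c) \to \Theta$ to all of $R$. For $\varphi: J \to \Theta$, the relations $v\cdot w = 0 = v \cdot d^*(c^*)^i$ force $\varphi(w), \varphi(d^*(c^*)^i) \in \Theta_d := d^*c^*K[c^*]$, say $\varphi(w) = d^*c^*\tilde q_{-1}(c^*)$ and $\varphi(d^*(c^*)^i) = d^*c^*\tilde q_i(c^*)$. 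Seeking $\tilde\varphi(r) = r\Psi$ with $\Psi = d^*c^*\tilde q_{-1}(c^*) + \sum_{k \geq 0} c^k c^* q_k(c^*)$ reduces, upon computing $d^*(c^*)^i \cdot \Psi$ (noting that $d^*(c^*)^i$ annihilates the $d^*$-part of $\Psi$), to the triangular system
\[\tilde q_i(x) = \sum_{k=0}^i x^{i-k} q_k(x), \qquad i \geq 0,\]
uniquely solved by $q_0 = \tilde q_0$ and $q_i = \tilde q_i - x\tilde q_{i-1}$ (all polynomials). For $\varphi: Rp(c) \to \Theta$, set $\psi := \varphi(p(c))$ and seek $\Psi \in \Theta$ with $p(c)\Psi = \psi$; writing $\psi = d^*c^*\psi_{-1}(c^*) + \sum_j c^j c^* \psi_j(c^*)$ and analogously for $\Psi$, this splits into $\Psi_{-1} = \psi_{-1}$ and the lower-triangular recursion $\Psi_j = \psi_j - p_1\Psi_{j-1} - \cdots - p_m\Psi_{j-m}$, again yielding polynomial coefficients $\Psi_j$, hence $\Psi \in \Theta$.

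The main obstacle --- and the reason $\Theta$ must contain formal power series in $c$ rather than mere polynomials --- is that the data defining $\varphi: J \to \Theta$ can have $\tilde q_i \neq 0$ for infinitely many $i$, so the extension $\Psi$ generically has a genuine formal series $\sum_{k \geq 0} c^k c^* q_k(c^*)$ as its $v$-part. That $\Theta = d^*c^*K[c^*] + [[c]](c^*K[c^*])$ permits exactly such series is precisely what makes both triangular recursions close up within $\Theta$ and Baer's criterion go through.
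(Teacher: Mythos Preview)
Your proof is correct and follows essentially the same strategy as the paper: the essentiality argument is identical, and for Baer's criterion you treat the same two families of ideals $J$ and $Rp(c)$, producing the very same extensions---your recursion $q_i=\tilde q_i - x\tilde q_{i-1}$ is exactly what one obtains by expanding the paper's choice $\overline\psi(1)=d^*c^*q'_{-1}(c^*)+\sum_{j\ge 0}c^jdd^*c^*q'_j(c^*)$ via $dd^*=v-cc^*$, and your recursion $\Psi_j=\psi_j-p_1\Psi_{j-1}-\cdots-p_m\Psi_{j-m}$ is the coefficient-wise form of the paper's convolution with the formal inverse $B(x)$ of $p(x)$ in $K[[x]]$. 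The only cosmetic addition is your explicit verification that $Rc^*\hookrightarrow\Theta$ is an $R$-module embedding, which the paper leaves implicit.
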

\begin{proof}
First let us show that $Rc^*$ is essential in $d^*c^*K[c^*]+[[c]](c^*K[c^*])$. If $\alpha:=d^*c^*q_{-1}(c^*)+\sum_{i\geq 0}vc^ic^*q_i(c^*)\not=0$ is any non-zero element in  $d^*c^*K[c^*]+[[c]](c^*K[c^*])$, let $\overline i:=\min\{i\geq -1: q_i(x)\not=0\}$. If $\overline i=-1$, then
$w\alpha=d^*c^*q_{-1}(c^*)$ is a non-zero element in $Rc^*$. If $\overline i\geq 0$, then
\[d^*(c^*)^{\overline i}\alpha=d^*(c^*)^{\overline i}\left(\sum_{i\geq \overline i}vc^ic^*q_i(c^*)\right)=d^*c^*q_{\overline i}(c^*)\]
is a non-zero element in $Rc^*$.

Second let us prove that  $d^*c^*K[c^*]+[[c]](c^*K[c^*])$ is injective. Denote by $\mathcal P$ the set $\{p(x)\in K[x] \ | \ p(0)=1\}$. As a consequence of Remark~\ref{rem:leftideals}, in order to apply the Baer criterion to determine the injectivity of $d^*c^*K[c^*]+[[c]](c^*K[c^*])$, we just have to  check the injectivity, (1) with respect to $J$, and (2) with respect to left ideals of the form $Rp(c)$, for $p(x)\in \mathcal P$. For (2), let $\varphi:Rp(c)\to d^*c^*K[c^*]+[[c]](c^*K[c^*])$ be a non-zero homomorphism of  $R$-modules with $p(x)=p_0+p_1x+\cdots p_nx^n\in\mathcal P$. Since  in our assumption $p_0=1\not=0$, the polynomial $p(x)$ is invertible in $K[[x]]$. Denote by $B(x)=\sum_{i\geq 0}b_ix^i$ the formal series such that
$B(x)p(x)=p(x)B(x)=1$. Then
\[p_0b_0=1, \text{ and }\sum_{j=0}^N p_jb_{N-j}=0\text{ for all }N\geq 1.\]
If $\varphi(p(c))=d^*c^*q_{-1}(c^*)+\sum_{i\geq 0}c^ic^*q_i(c^*)$, consider the element
\[\beta:=b_0d^*c^*q_{-1}(c^*)+\sum_{i\geq 0}c^i\left(\sum_{j=0}^ib_jc^*q_{i-j}(c^*)\right) \in d^*c^*K[c^*]+[[c]](c^*K[c^*]).\]
Setting $\overline\varphi(1)=\beta$, we define an homomorphism 
\[\overline\varphi:R\to d^*c^*K[c^*]+[[c]](c^*K[c^*])\] which extends $\varphi$. Indeed,
\begin{align*}
\overline\varphi(p(c))&=p(c)\overline\varphi(1)=p(c)\beta\\
&=(p_0+p_1c+\cdots p_nc^n)\left(b_0d^*c^*q_{-1}(c^*)+\sum_{i\geq 0}c^i\left(\sum_{j=0}^ib_jc^*q_{i-j}(c^*)\right) \right)\\
&=p_0b_0d^*c^*q_{-1}(c^*)+p_0b_0c^*q_0(c^*)+c\Big(p_1b_0c^*q_0(c^*)+p_0\big(b_0c^*q_1(c^*)+b_1c^*q_0(c^*)\big)\Big)+\\
+c^2\Big(p_2&b_0c^*q_0(c^*)+p_1\big(b_0c^*q_1(c^*)+b_1c^*q_0(c^*)\big)+p_0\big(b_0c^*q_2(c^*)+b_1c^*q_1(c^*)+b_2c^*q_0(c^*)\big)\Big)+\cdots\\
&=d^*c^*q_{-1}(c^*)+c^*q_0(c^*)+cc^*q_1(c^*)+c^2c^*q_2(c^*)+\cdots=\varphi(p(c)).
\end{align*}
So injectivity with respect to left ideals of the form $Rp(c)$, for $p(x)\in \mathcal P$, has been shown.

On the other hand, for (1),
%
%
let now $\psi:J\to d^*c^*K[c^*]+[[c]](c^*K[c^*])$ be a non-zero homomorphism of  $R$-modules. Since $\psi(w)=\psi(w\cdot w)=w\psi(w)$, and $\psi(d^*(c^*)^i)=\psi(w\cdot d^*(c^*)^i)=w\psi(d^*(c^*)^i)$ for each $i\geq 0$, it follows that 
\[\psi(w)=d^*c^*q'_{-1}(c^*), \text{ and }\psi(d^*(c^*)^i)=d^*c^*q'_{i}(c^*)\]
for suitable polynomials $ q'_i(x)\in K[x]$, $i\geq -1$. Setting
\[\overline\psi(1)=d^*c^*q'_{-1}(c^*)+\sum_{j\geq 0}c^jdd^*c^*q'_j(c^*),\]
we extend $\psi$ to an homomorphism $\overline\psi:R\to d^*c^*K[c^*]+[[c]](c^*K[c^*])$. Indeed, $\overline\psi(w)=w\overline\psi(1)=d^*c^*q'_{-1}(c^*)$, and
$\overline\psi(d^*(c^*)^i)=d^*(c^*)^i\overline\psi(1)=d^*c^*q'_{i}(c^*)$ for all $ i\geq 0$.
\end{proof}

As we noticed in Lemma~\ref{lemma:imaggini}, in order to determine all the indecomposable injective $R$-modules, we have to study the quotients of $E(R)$, and in particular of $E(Rc^*)$.  

The injective $R$-module $E(Rc^*)=d^*c^*K[c^*]+[[c]](c^*K[c^*])$ has the following  $R$-submodule:
\[E(Rc^*)^b:=\{d^*c^*q_{-1}(c^*)+\sum_{i\geq 0}c^ic^*q_i(c^*)\mid q_i(x)\in K[x], \ \max_i\deg q_i(x)\in\mathbb N\}.\]
The  $R$-module $E(Rc^*)^b$ consists of the elements in $E(Rc^*)$ for which the polynomials $\{q_i(c^*):i\geq -1\}$ in $c^*$ have bounded degree. It is easy to check that 
\[\bigoplus_{j\geq 1}E\big(Rd^*(c^*)^j\big)=\big(Kd^*c^*+[[c]](K(v-cc^*)c^*)\big)[c^*]\leq E(Rc^*)^b\leq E(Rc^*).\]

\begin{proposition}
The submodule $\bigoplus_{j\geq 1}Rd^*(c^*)^j$ is essential in $E(Rc^*)$ and therefore
\[E\Big(\bigoplus_{j\geq 1}Rd^*(c^*)^j\Big)=E\Big(\bigoplus_{j\geq 1}E\big(Rd^*(c^*)^j\big)\Big)=E\big(E(Rc^*)^b\big)=E(Rc^*).\]
\end{proposition}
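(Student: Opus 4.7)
The plan is to reuse almost verbatim the essentiality calculation already carried out in the previous theorem for $Rc^*$ inside $E(Rc^*)$, and to notice the extra feature that the non-zero left multiple produced there already lies in the smaller submodule $\bigoplus_{j\geq 1}Rd^*(c^*)^j$. Once essentiality is secured, the chain of equalities of injective envelopes will follow from the transitivity of essential extensions and the injectivity of $E(Rc^*)$.

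Concretely, I would pick a non-zero element
\[
\alpha = d^*c^*q_{-1}(c^*) + \sum_{i\geq 0} c^i c^* q_i(c^*) \in E(Rc^*),
\]
and set $\bar i := \min\{i\geq -1 : q_i\neq 0\}$. As in the proof of the previous theorem, left multiplication by $w$ (when $\bar i = -1$) or by $d^*(c^*)^{\bar i}$ (when $\bar i \geq 0$), using $c^*d = d^*c = 0$ and $c^*c = v$, produces in either case the non-zero element $d^*c^*q_{\bar i}(c^*)$. Expanding $q_{\bar i}(x) = \sum_{k\geq 0} a_k x^k \in K[x]$ one obtains
\[
d^*c^*q_{\bar i}(c^*) \;=\; \sum_{k\geq 0} a_k\, d^*(c^*)^{k+1},
\]
a finite $K$-linear combination whose $k$-th summand belongs to $Kd^*(c^*)^{k+1}\subseteq Rd^*(c^*)^{k+1}$. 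Since the summands $Rd^*(c^*)^j$ form an (internal) direct sum inside $R$ and at least one $a_k$ is non-zero, this element is a non-zero element of $\bigoplus_{j\geq 1} Rd^*(c^*)^j$. Thus $R\alpha \cap \bigoplus_{j\geq 1}Rd^*(c^*)^j\neq 0$, and essentiality is established.

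For the second assertion, the chain of inclusions
\[
\bigoplus_{j\geq 1} Rd^*(c^*)^j \;\subseteq\; \bigoplus_{j\geq 1} E\bigl(Rd^*(c^*)^j\bigr) \;\subseteq\; E(Rc^*)^b \;\subseteq\; E(Rc^*)
\]
is already recorded right before the statement. Since essentiality is transitive and the leftmost module is essential in the rightmost, every intermediate submodule is essential in $E(Rc^*)$ as well; as $E(Rc^*)$ is injective, it coincides with the injective envelope of each of them.

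I do not expect a genuine obstacle: the only point that might look subtle is the combinatorial observation $d^*c^*\cdot (c^*)^k = d^*(c^*)^{k+1}$, which places the multiple we produce into a specific direct summand of the socle, and this is precisely what distinguishes this argument from the coarser essentiality of $Rc^*$ in $E(Rc^*)$ proved earlier.
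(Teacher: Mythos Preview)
Your proposal is correct and follows essentially the same argument as the paper: both pick the minimal index $\bar i$ with $q_{\bar i}\neq 0$, left-multiply by $w$ or $d^*(c^*)^{\bar i}$ accordingly, and observe that the resulting element $d^*c^*q_{\bar i}(c^*)$ already lies in $\bigoplus_{j\ge 1}Rd^*(c^*)^j$. You make explicit the expansion $d^*c^*q_{\bar i}(c^*)=\sum_k a_k d^*(c^*)^{k+1}$ and the passage to the chain of equalities via transitivity of essentiality, which the paper leaves implicit, but the ideas are identical.
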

\begin{proof}
Let $d^*c^*q_{-1}(c^*)+\displaystyle\sum_{i\geq 0}c^ic^*q_i(c^*)$ be a non zero element in $E(Rc^*)$. Denote by $\ell$ the minimum $i$ such that $q_i(c^*)\not=0$. If $\ell=-1$, then
\[w\big(d^*c^*q_{-1}(c^*)+\displaystyle\sum_{i\geq 0}c^ic^*q_i(c^*)\big)=d^*c^*q_{-1}(c^*)\in \bigoplus_{j\geq 1}Rd^*(c^*)^i.\]
If $\ell\geq 0$, then
\[d^*(c^*)^\ell\big(d^*c^*q_{-1}(c^*)+\displaystyle\sum_{i\geq 0}c^ic^*q_i(c^*)\big)=
d^*(c^*)^\ell\big(\displaystyle\sum_{i\geq \ell}c^ic^*q_i(c^*)\big)=
d^*c^*q_{\ell}(c^*)\]
belongs to $\bigoplus_{j\geq 1}Rd^*(c^*)^i$.
\end{proof}

\begin{lemma}\label{lemma:Laurentmodules}
It is 
\begin{align*}
J\cdot E(Rc^*)=J\cdot E(Rc^*)^b&=d^*c^*K[c^*]\\
&\leq \bigoplus_{j\geq 1} Kd^*(c^*)^j+[[c]](Kdd^*(c^*)^j)\\
&= \bigoplus_{j\geq 1} E(Rd^*(c^*)^j).\end{align*}
\end{lemma}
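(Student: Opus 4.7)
The plan is to establish the chain of identities in order, using Proposition~\ref{prop:modulo} to make the module action on $E(Rc^*)$ explicit.

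For the first equalities $J\cdot E(Rc^*) = J\cdot E(Rc^*)^b = d^*c^*K[c^*]$, I would use that $J = Rw \oplus \bigoplus_{i\geq 0} Rd^*(c^*)^i$ is generated as a left ideal by $w$ and the family $\{d^*(c^*)^i\}_{i\geq 0}$. For a generic element $\alpha = d^*c^*q_{-1}(c^*) + \sum_{k\geq 0} c^k c^* q_k(c^*) \in E(Rc^*)$, I would compute $w\cdot\alpha = d^*c^*q_{-1}(c^*)$, using the relations $wd^*=d^*$ and $wc=wc^*=0$; and, by expanding each factor $(c^*)^i c^k$ through repeated applications of $c^*c=v$ (distinguishing the cases $k<i$, $k=i$ and $k>i$), together with $d^*c=d^*d^*=0$, I would obtain $d^*(c^*)^i\cdot\alpha = d^*c^*q_i(c^*) + \sum_{0\leq k<i} d^*(c^*)^{i-k+1} q_k(c^*)$. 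All such values lie in $d^*c^*K[c^*]$, giving the inclusion $J\cdot E(Rc^*)\subseteq d^*c^*K[c^*]$. The reverse inclusion is immediate from the first computation: for every $j\geq 1$, $d^*(c^*)^j\in E(Rc^*)^b\subseteq E(Rc^*)$ and $w\cdot d^*(c^*)^j = d^*(c^*)^j$, so $d^*c^*K[c^*]=\bigoplus_{j\geq 1}Kd^*(c^*)^j\subseteq J\cdot E(Rc^*)^b \subseteq J\cdot E(Rc^*)$.

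For the containment $d^*c^*K[c^*]\leq \bigoplus_{j\geq 1}(Kd^*(c^*)^j+[[c]](Kdd^*(c^*)^j))$, one simply notes that $d^*c^*K[c^*]=\bigoplus_{j\geq 1}Kd^*(c^*)^j$ sits inside the right-hand side as the sum of its polynomial summands. The concluding equality with $\bigoplus_{j\geq 1}E(Rd^*(c^*)^j)$ was already recorded in Remark~\ref{rem:directsum}: the isomorphism $Rw\to Rd^*(c^*)^j$ given by right multiplication by $d^*(c^*)^j$ extends to an isomorphism of injective envelopes, and combined with the description $E(Rw)=Kw+[[c]](Kd)$ this yields $E(Rd^*(c^*)^j)=Kd^*(c^*)^j+[[c]](Kdd^*(c^*)^j)$.

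The main technical point is the careful bookkeeping in the computation of $d^*(c^*)^i \cdot c^k c^* q_k(c^*)$: after reducing $(c^*)^i c^k$ to a single power of $c^*$ or $c$ using $c^*c=v$ (with the case $k>i$ giving a nonzero $c^{k-i}$ that is then killed by $d^*c=0$), the vanishings $d^*c=d^*d^*=0$ are what ensure all surviving contributions stay inside $d^*c^*K[c^*]$; once this step is handled cleanly, the rest of the verification is routine.
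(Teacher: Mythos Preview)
Your approach mirrors the paper's: act on a generic element of $E(Rc^*)$ by the left-ideal generators $w$ and $d^*(c^*)^i$ of $J$, observe the results land in $d^*c^*K[c^*]$, and then read off the remaining containment and equality from Remark~\ref{rem:directsum}. The paper's proof is a one-line assertion of the same computation; your version simply fills in the details the paper omits.

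There is, however, one genuine gap in the argument (which the paper's terse proof also glosses over). Knowing that $w\cdot\alpha$ and $d^*(c^*)^i\cdot\alpha$ lie in $d^*c^*K[c^*]$ only yields $J\cdot E(Rc^*)\subseteq R\cdot d^*c^*K[c^*]$, since a general element of $J$ has the form $r_{-1}w+\sum_i r_i\,d^*(c^*)^i$ with $r_i\in R$, and you must still multiply on the left by the $r_i$. But $d^*c^*K[c^*]$ is \emph{not} an $R$-submodule of $E(Rc^*)$: for instance $d\cdot d^*(c^*)^j=dd^*(c^*)^j=(c^*)^j-c(c^*)^{j+1}$ lies in $[[c]](c^*K[c^*])$, not in $d^*c^*K[c^*]$. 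So the literal equality $J\cdot E(Rc^*)=d^*c^*K[c^*]$ cannot hold; what one actually obtains is $J\cdot E(Rc^*)=R\cdot d^*c^*K[c^*]=\bigoplus_{j\ge 1}Rd^*(c^*)^j$. This does not affect anything downstream: the only use of the lemma is the containment $J\cdot E(Rc^*)\le\bigoplus_{j\ge 1}E(Rd^*(c^*)^j)$, and that still follows since $\bigoplus_{j\ge 1}Rd^*(c^*)^j\le\bigoplus_{j\ge 1}E(Rd^*(c^*)^j)$.
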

\begin{proof}
Recall that $J=Rw\oplus\bigoplus_{i\geq 0}Rd^*(c^*)^i$. Then
\[J\cdot [[c]](c^*K[c^*])=d^*(c^*K[c^*]).\]
Therefore 
\[J\cdot E(Rc^*)=d^*c^*K[c^*]=J\cdot E(Rc^*)^b.\]
The remaining (dis)equalities are clear.
\end{proof}
We have the following short exact sequences of left $R$-modules:
\[0\to\bigoplus_{j\geq 1} E(Rd^*(c^*)^j)\to E(Rc^*)^b\to E(Rc^*)^b/\bigoplus_{j\geq 1} E(Rd^*(c^*)^j)\to 0,\quad\text{and}
\]
\[0\to\bigoplus_{j\geq 1} E(Rc^*)^b\to E(Rc^*)\to E(Rc^*)/E(Rc^*)^b\to 0.
\]
 Also if it is not strictly connected with the aim of this paper, we think it could be interesting to observe the following:
\begin{proposition}\label{prop:LaurentSeries}
The quotient $E(Rc^*)^b/\bigoplus_{j\geq 1} E(Rd^*(c^*)^j)$ is isomorphic as $K[\c,\c^{-1}]$-module to the Laurent formal series $K((\c))$.
\end{proposition}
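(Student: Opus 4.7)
The plan is to construct an explicit $R$-linear surjection $\Phi\colon E(Rc^*)^b \to K((\c))$ whose kernel is exactly $\bigoplus_{j\geq 1} E(Rd^*(c^*)^j)$, where $K((\c))$ is regarded as an $R$-module via the quotient $R\twoheadrightarrow R/J = K[\c,\c^{-1}]$.

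First I would define
\[\Phi\Big(d^*c^*q_{-1}(c^*) + \sum_{i \geq 0} c^i c^* q_i(c^*)\Big) := \sum_{i \geq 0} \c^{i-1} q_i(\c^{-1}),\]
sending the basis element $c^i(c^*)^j$ to $\c^{i-j}$ and the $d^*$-piece to zero. Since $E(Rc^*)^b$ imposes a uniform bound $\deg q_i \leq N$ for some $N$, every exponent appearing in $\Phi(\alpha)$ is $\geq -N$ and every coefficient of $\c^n$ is a finite $K$-linear combination; thus $\Phi(\alpha)$ is a genuine Laurent series. A direct computation then shows that $\Phi$ is $R$-linear: multiplication by $c$ on $c^i(c^*)^j$ yields $c^{i+1}(c^*)^j \mapsto \c^{i+1-j}$, and multiplication by $c^*$ yields $c^{i-1}(c^*)^j$ (for $i \geq 1$) or $(c^*)^{j+1}$ (for $i = 0$), both mapping to $\c^{i-j-1}$; the ideal $J$ clearly acts as zero on the image.

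Next I would verify that $\Phi$ annihilates $\bigoplus_{j \geq 1} E(Rd^*(c^*)^j)$. The elements $d^*(c^*)^j$ go to $0$ by definition, and using $dd^* = v - cc^*$ each $c^i dd^*(c^*)^j = c^i(c^*)^j - c^{i+1}(c^*)^{j+1}$ is sent to $\c^{i-j} - \c^{i-j} = 0$; the identical cancellation persists for the formal $c$-series in each $E(Rd^*(c^*)^j)$ and then for the finite direct sum over $j$. Surjectivity of the induced map $\overline\Phi$ on the quotient is immediate: the Laurent series $\sum_{n \geq -N} a_n \c^n$ is the image of $\sum_{n \geq -N} a_n\, c^{n+N}(c^*)^N \in E(Rc^*)^b$.

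The only delicate point is injectivity. Here I would exploit the identity $c^i(c^*)^j = c^{i+1}(c^*)^{j+1} + c^i dd^*(c^*)^j$ to push, modulo $\bigoplus_j E(Rd^*(c^*)^j)$, each summand of $\alpha$ from its original level $j$ up to a common top level $N$. Because $\alpha \in E(Rc^*)^b$ involves only finitely many levels $j \in \{1, \dots, N\}$, the cumulative correction is a finite sum of formal $c$-series belonging to $E(Rd^*(c^*)^{j+k})$ for $0 \leq k < N-j$, hence genuinely lies in the direct sum. After this reduction, the class of $\alpha$ in the quotient is represented by a unique element of the form $\sum_{i \geq 0} a_i c^i(c^*)^N$, on which $\Phi$ is the obvious bijection onto $\c^{-N}K[[\c]]$. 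Therefore $\ker \Phi = \bigoplus_{j\geq 1} E(Rd^*(c^*)^j)$, and $\overline\Phi$ is the desired isomorphism. The main obstacle is precisely this bookkeeping: one must verify that telescoping the level-raising relation simultaneously over all $i$ (i.e.\ at the level of formal power series in $c$) still produces an element of the direct sum rather than the direct product, and this is exactly where the bounded-degree condition in the definition of $E(Rc^*)^b$ is indispensable.
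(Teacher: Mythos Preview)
Your proof is correct and rests on the same key identity as the paper's, namely $c^i(c^*)^j \equiv c^{i+1}(c^*)^{j+1}$ modulo $E(Rd^*(c^*)^j)$, together with the observation that the bounded-degree hypothesis keeps the accumulated corrections inside the \emph{finite} direct sum $\bigoplus_{j\geq 1}E(Rd^*(c^*)^j)$. The organization differs slightly: the paper first produces a ``diagonal'' canonical representative $\bigl(a'_{-m}(c^*)^m+\cdots+a'_0v+\sum_{i\geq 1}a'_ic^i\bigr)c^*$ for each class and then reads off the isomorphism from representatives, whereas you define the map $\Phi$ globally via $c^i(c^*)^j\mapsto \c^{\,i-j}$ and afterwards identify its kernel by pushing everything to a common ``top level'' $(c^*)^N$; your ordering is arguably cleaner since $R$-linearity and well-definedness are immediate, while the paper must separately verify that its representative is unique and compatible with the $K[\c,\c^{-1}]$-action.
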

\begin{proof}
By Lemma~\ref{lemma:Laurentmodules}, $J\cdot \left(E(Rc^*)^b/\bigoplus_{j\geq 1} E(Rd^*(c^*)^j)\right)=0$ and hence the quotient is a $R/J=K[\c,\c^{-1}]$-module.\\
Let $d^*c^*q_{-1}(c^*)+\sum_{i\geq 0}c^ic^*q_i(c^*)$ be an element of $E(Rc^*)^b$ and $\n\in\mathbb N$ the maximum degree of the polynomials $q_i(x)$, $i\geq -1$. Set
\[q_i(x)=a_{0,i}+a_{1,i}x+\cdots+a_{\n,i}x^{\n}\quad i\geq 0.\]
Observe that $d^*c^*q_{-1}(c^*)$ belongs to $\bigoplus_{j\geq 1} E(Rd^*(c^*)^j)$; therefore
\[d^*c^*q_{-1}(c^*)+\sum_{i\geq 0}c^ic^*q_i(c^*)\equiv \left(\sum_{i\geq 0}c^iq_i(c^*)\right)c^*\mod \bigoplus_{j\geq 1} E(Rd^*(c^*)^j).\]
Now, making explicit the polynomials $q_i(c^*)$, we get
\begin{align*}
\left(\sum_{i\geq 0}c^iq_i(c^*)\right)c^*&=\left(\sum_{i\geq 0}c^i(a_{0,i}+a_{1,i}c^*+\cdots+a_{\n,i}(c^*)^{\n})\right)c^*\\
&=\Big(a_{\n,0}(c^*)^{\n}+\left[a_{\n-1,0}(c^*)^{\n-1}+a_{\n,1}c(c^*)^{\n}\right]+\cdots+\\
&+\big[a_{0,0}v+a_{1,1}cc^*+\cdots+a_{\n,\n}c^{\n}(c^*)^{\n}\big]+\\
&+\big[a_{0,1}c+\cdots+a_{\n,\n+1}c^{\n+1}(c^*)^{\n}\big]+\cdots+\\
&+\big[a_{0,\ell}c^\ell +\cdots+a_{\n,\n+\ell}c^{\n+\ell}(c^*)^{\n}\big]+\cdots\Big)c^*
\end{align*}
Given any non-negative integer $m$, for $h<m$ we have
\[c^h(c^*)^m =(c^*)^{m-h}-\big(c^{h-1}(v-cc^*)(c^*)^{m-1}+\cdots+(v-cc^*)(c^*)^{m-h}\big);\]
for any $h\geq m$ we have
\[c^h(c^*)^m =c^{h-m}-\big(c^{h-1}(v-cc^*)(c^*)^{m-1}+\cdots+c^{h-m}(v-cc^*)\big).\]
In particular, letting $m=1,\dots,\n$, we have the following congruences modulo $\bigoplus_{j\geq 1} E(Rd^*(c^*)^j)=\big(Kd^*c^*+[[c]]K(v-cc^*)c^*\big)[c^*]$:
\begin{align*}\left(\sum_{i\geq 0}c^iq_i(c^*)\right)&c^*\equiv \Big( a_{\n,0}(c^*)^{\n}+\left[a_{\n-1,0}+a_{\n,1}\right](c^*)^{\n-1}+\cdots+\\
&+\big[a_{0,0}+a_{1,1}+\cdots+a_{\n,\n}\big]v+\big[a_{0,1}+\cdots+a_{\n,\n+1}\big]c+\\
&+\cdots+\big[a_{0,\ell}+\cdots+a_{\n,\n+\ell}\big]c^\ell+\cdots\Big)c^*.
\end{align*}
Moreover the latter belongs to $\bigoplus_{j\geq 1} E(Rd^*(c^*)^j)$ if and only if 
\[0=a_{\n,0}=a_{\n-1,0}+a_{\n,1}=\cdots=a_{0,0}+a_{1,1}+\cdots+a_{\n,\n}\text{ and}
\]
\[0=a_{0,\ell}+a_{1,1+\ell}+\cdots+a_{\n,\n+\ell}\quad\forall \ell\geq 1.\]
Therefore a irredundant  set of representatives for the elements of the quotient $E(Rc^*)^b/\bigoplus_{j\geq 1} E(Rd^*(c^*)^j)$ is given by the elements of $E(Rc^*)^b$ of the form
\[\left(a'_{-m}(c^*)^m+\cdots+a'_{-1}c^*+a'_0v+\sum_{i\geq 1}a'_ic^i\right)c^*\quad m\in\mathbb N, a'_j\in K\ \forall j\geq -m.\]
This representatives have a nice behaviour with respect to the $K[\c,\c^{-1}]$-module structure of the quotient: indeed we have
\begin{align*}
\c\cdot \Big(a'_{-m}(c^*)^m&+\cdots+a'_{-1}c^*+a'_0v+\sum_{i\geq 1}a'_ic^i\Big)c^*\equiv\\
&\equiv \Big(a'_{-m}(c^*)^{m-1}+\cdots+a'_{-1}v+a'_0c+\sum_{i\geq 1}a'_ic^{i+1}\Big)c^*,\quad\text{and}\end{align*}
\begin{align*}
\c^{-1}\cdot \Big(a'_{-m}(c^*)^m&+\cdots+a'_{-1}c^*+a'_0v+\sum_{i\geq 1}a'_ic^i\Big)c^*\equiv\\
&\equiv \Big(a'_{-m}(c^*)^{m+1}+\cdots+a'_{-1}(c^*)^2+a'_0c^*+\sum_{i\geq 1}a'_ic^{i-1}\Big)c^*.\end{align*}

Setting  
\[\left(a'_{-m}(c^*)^m+\cdots+a'_{-1}c^*+a'_0v+\sum_{i\geq 1}a'_ic^i\right)c^*+\bigoplus_{j\geq 1} E(Rd^*(c^*)^j)\mapsto \sum_{i\geq -m}a'_i\c^i\]
for each $m\geq 0$ and each sequence of scalars $(a'_i)_{i\geq -m}$ in $K$, one therefore 
defines an isomorphism of $K[\c,\c^{-1}]$-modules between the quotient $E(Rc^*)^b/\bigoplus_{j\geq 1} E(Rd^*(c^*)^j)$ and $K((\c))$.\end{proof}

We are now ready to prove that

\begin{proposition}
In the $K[\c,\c^{-1}]$-module $E(Rc^*)/\bigoplus_{j\geq 1} E(Rd^*(c^*)^j)$ there are no $p(\c)$-torsion elements for any polynomial $p(x)\in K[x]$  with $p(0)=1$.
\end{proposition}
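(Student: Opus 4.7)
The plan is to combine the inclusion $\bigoplus_{j\geq 1} E(Rd^*(c^*)^j)\leq E(Rc^*)^b$ already noted in the text with the identification $E(Rc^*)^b/\bigoplus_{j\geq 1} E(Rd^*(c^*)^j)\cong K((\c))$ provided by Proposition~\ref{prop:LaurentSeries}. I would reduce the general $p(\c)$-torsion problem to the bounded-degree case and then invoke that the target $K((\c))$ is a field.

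I would start by picking a representative $\alpha=d^*c^*q_{-1}(c^*)+\sum_{i\geq 0}c^ic^*q_i(c^*)\in E(Rc^*)$ of a $p(\c)$-torsion class, so that $p(c)\alpha\in\bigoplus_{j\geq 1} E(Rd^*(c^*)^j)$. Since $cd^*=0$, multiplication by $c^k$ annihilates the $d^*$-summand and shifts $\sum_{i\geq 0}c^ic^*q_i(c^*)$ to $\sum_{i\geq k}c^ic^*q_{i-k}(c^*)$; writing $p(c)=1+p_1c+\cdots+p_nc^n$ one obtains
\[p(c)\alpha=d^*c^*q_{-1}(c^*)+\sum_{i\geq 0}c^ic^*\tilde q_i(c^*),\qquad\text{where }\tilde q_i=\sum_{k=0}^{\min(i,n)}p_kq_{i-k}.\]
Because $\bigoplus_{j\geq 1} E(Rd^*(c^*)^j)\leq E(Rc^*)^b$, the hypothesis forces a uniform degree bound $\deg\tilde q_i\leq N\in\mathbb N$. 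Using that $p_0=1$, I would then invert the convolution recursively via $q_0=\tilde q_0$ and $q_i=\tilde q_i-\sum_{k=1}^{\min(i,n)}p_kq_{i-k}$ for $i\geq 1$, and a straightforward induction on $i$ yields $\deg q_i\leq N$ for every $i\geq 0$. Consequently $\alpha\in E(Rc^*)^b$.

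With $\alpha$ now in $E(Rc^*)^b$, it suffices to observe that $E(Rc^*)^b/\bigoplus_{j\geq 1} E(Rd^*(c^*)^j)$ is $p(\c)$-torsion-free. By Proposition~\ref{prop:LaurentSeries} this quotient is isomorphic as a $K[\c,\c^{-1}]$-module to the Laurent series field $K((\c))$, which is torsion-free over $K[\c,\c^{-1}]$ because every nonzero Laurent series is invertible. As $p(\c)\neq 0$, the image of $\alpha$ in $K((\c))$ must vanish, whence $\alpha\in\bigoplus_{j\geq 1} E(Rd^*(c^*)^j)$ as desired. I do not anticipate a serious obstacle: the only delicate point is the inversion step, which goes through cleanly precisely because the condition $p_0=1$ renders the passage from $(q_i)$ to $(\tilde q_i)$ a unitriangular linear map with respect to the indexing by $i$.
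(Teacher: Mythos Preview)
Your proposal is correct and follows essentially the same two-step strategy as the paper: first show that any $p(\c)$-torsion element already lies in $E(Rc^*)^b$, then invoke Proposition~\ref{prop:LaurentSeries} to conclude via the torsion-freeness of $K((\c))$. The only cosmetic difference is that the paper phrases the first step as torsion-freeness of the intermediate quotient $E(Rc^*)/E(Rc^*)^b$ and dispatches it in one sentence, whereas you carry out the unitriangular inversion of the convolution $(q_i)\mapsto(\tilde q_i)$ explicitly; your version makes transparent exactly why the degree bound propagates back from the $\tilde q_i$ to the $q_i$.
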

\begin{proof}
We will prove that there are no $p(\c)$ torsion elements in both the quotients $E(Rc^*)/E(Rc^*)^b$ and $E(Rc^*)^b/\bigoplus_{j\geq 1} E(Rd^*(c^*)^j)$. The first is easy to check: indeed for any element $d^*c^*q_{-1}(c^*)+\sum_{i\geq 0}c^ic^*q_i(c^*)$ in $E(Rc^*)$ the product
\[p(c)\cdot\big(d^*c^*q_{-1}(c^*)+\sum_{i\geq 0}c^ic^*q_i(c^*)\big)\]
belongs to $E(Rc^*)^b$ if and only if all the degrees of the polynomials $q_i(x)$, $i\geq 0$, is bounded, and because $K[\c,\c^{-1}]$ is a domain, this happens if and only if $d^*c^*q_{-1}(c^*)+\sum_{i\geq 0}c^ic^*q_i(c^*)$ was already in $E(Rc^*)^b$. For the second, we proved in Proposition~\ref{prop:LaurentSeries} that $E(Rc^*)^b/\bigoplus_{j\geq 1} E(Rd^*(c^*)^j)$ is $K[\c,\c^{-1}]$-isomorphic to the Laurent formal series $K((\c))$, and the latter has no $p(\c)$-torsion elements.
\end{proof}

\begin{corollary}\label{cor:finale}
The left $R$-module $E(Rc^*)/\bigoplus_{j\geq 1} E(Rd^*(c^*)^j)$ is $K[\c,\c^{-1}]$-isomorphic to a direct sum of copies of $K(\c)$ (see Remark~\ref{rem:K(C)}).
\end{corollary}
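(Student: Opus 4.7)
The plan is to combine three ingredients already in place: the fact that $Q:=E(Rc^*)/\bigoplus_{j\geq 1}E(Rd^*(c^*)^j)$ is annihilated by $J$, the hereditary nature of $R$, and Matlis' structure theorem over the Laurent polynomial ring.

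First I would observe that $Q$ carries a natural $K[\c,\c^{-1}]$-module structure. This is immediate from Lemma~\ref{lemma:Laurentmodules}, which gives $J\cdot E(Rc^*)=d^*c^*K[c^*]\subseteq\bigoplus_{j\geq 1}E(Rd^*(c^*)^j)$, so $J\cdot Q=0$ and the $R$-action factors through $R/J=K[\c,\c^{-1}]$.

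Next I would upgrade $Q$ from an $R$-module to an injective $K[\c,\c^{-1}]$-module. Since $R$ is hereditary (\cite[Theorem~3.7]{AG12}), the quotient of the injective $R$-module $E(Rc^*)$ by its submodule $\bigoplus_{j\geq 1}E(Rd^*(c^*)^j)$ is again an injective $R$-module. Combined with $J\cdot Q=0$, the first part of Lemma~\ref{injsuRsuRJ} then gives that $Q$ is an injective $K[\c,\c^{-1}]$-module.

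Now I would invoke Matlis' theorem. Since $K[\c,\c^{-1}]$ is a commutative Noetherian ring, every injective $K[\c,\c^{-1}]$-module decomposes as a direct sum of indecomposable injectives; by Remark~\ref{PIDLAURENT} the indecomposable injective $K[\c,\c^{-1}]$-modules are exactly $K(\c)$ and the Prüfer modules $E(K[\c,\c^{-1}]/\langle f(\c)\rangle)$ for irreducible polynomials $f(x)\in K[x]$ with $f(0)=1$. Each Prüfer summand, being the $f(\c)$-torsion submodule of $K(\c)/K[\c,\c^{-1}]$, consists entirely of $f(\c)$-torsion elements. But by the preceding proposition $Q$ has no $p(\c)$-torsion for any such $p(x)$. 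Hence no Prüfer summand can occur in the Matlis decomposition of $Q$, and I conclude that $Q$ is a direct sum of copies of $K(\c)$.

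The routine work is confined to verifying that the cited theorems genuinely apply; no new computation is needed, and I do not foresee a serious obstacle beyond collating the previously established facts.
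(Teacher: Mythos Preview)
Your proof is correct and follows essentially the same route as the paper's own argument: both use hereditariness of $R$ together with Lemma~\ref{lemma:Laurentmodules} (and implicitly Lemma~\ref{injsuRsuRJ}) to see that the quotient is an injective $K[\c,\c^{-1}]$-module, then invoke the Matlis decomposition and eliminate the Pr\"ufer summands via the preceding torsion-free proposition. The only difference is that you spell out the appeal to Lemma~\ref{injsuRsuRJ} explicitly, whereas the paper leaves this step implicit.
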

\begin{proof}
Being  $R$ hereditary, and by Lemma~\ref{lemma:Laurentmodules}, 
we have that the quotient $E(Rc^*)/\bigoplus_{j\geq 1} E(Rd^*(c^*)^j)$  is  an  injective $R/J=K[\c,\c^{-1}]$-module. Thus it is a  direct sum of indecomposable injective $K[\c,\c^{-1}]$-modules, and so of modules isomorphic to either $K(\c)$ or to $\varinjlim_i K[\c,\c^{-1}]/K[\c,\c^{-1}]f(\c)^i$ for  suitable irreducible polynomials $f(x)\in K[x]$ with $f(0)=1$ (see \cite[Theorem 3.48]{Lam1}). Then we conclude by the previous proposition.
\end{proof}

Now we have all the results in place to establish our main result, the description of all indecomposable injective left $R$-modules.

\begin{theorem}\label{thm:main}
Let $R$ denote the Leavitt path algebra
$L_K (\mathcal T )$. 
The complete list of indecomposable injective left $R$-modules is
$E(Rw)$, $K(\c)$, and the Pr\"ufer modules $U^f$, for each irreducible polynomial $f(x)$ in $K[x]$ with $f(0)=1$.
\end{theorem}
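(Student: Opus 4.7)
The plan is to take an arbitrary indecomposable injective left $R$-module $E$ and to split the argument according to whether $\Soc(E)$ is zero or not. The main tools will be the classification of the simple left $R$-modules and their injective envelopes (recalled in Section~1), together with Matlis' Theorem applied to $R/J=K[\c,\c^{-1}]$ via Lemma~\ref{injsuRsuRJ}.

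\emph{Case 1: $\Soc(E)\neq 0$.} I would pick any simple submodule $S\subseteq E$; since $E$ is injective, the inclusion $S\hookrightarrow E$ extends to an embedding $E(S)\hookrightarrow E$, and $E(S)$, being an injective submodule of $E$, is a direct summand. Indecomposability forces $E=E(S)$. Since the simple left $R$-modules are either $Rw$ (with injective envelope $E(Rw)$) or $R/Rf(c)\cong K[\c,\c^{-1}]/\langle f(\c)\rangle$ for some irreducible $f\in K[x]$ with $f(0)=1$ (with injective envelope $U^f$), we conclude $E\cong E(Rw)$ or $E\cong U^f$ in this case.

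\emph{Case 2: $\Soc(E)=0$.} The key step to establish is that $JE=0$; once this holds, Lemma~\ref{injsuRsuRJ} turns $E$ into an injective $K[\c,\c^{-1}]$-module, and Matlis' Theorem in the form of Remark~\ref{PIDLAURENT} will finish the job. For the key step, if some $x\in E$ satisfied $Jx\neq 0$, then the nonzero $R$-submodule $Jx$ would be a homomorphic image of the semisimple left $R$-module $J=Rw\oplus\bigoplus_{i\geq 0}Rd^*(c^*)^i$, hence itself semisimple, providing a simple submodule of $E$ and contradicting $\Soc(E)=0$. Once $JE=0$, the $R$- and $K[\c,\c^{-1}]$-submodules of $E$ coincide, so $E$ is indecomposable also as a $K[\c,\c^{-1}]$-module, and therefore isomorphic to $K(\c)$ or to some $U^f$. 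The second option contradicts $\Soc(E)=0$, because $K[\c,\c^{-1}]/\langle f(\c)\rangle$ embeds as a simple submodule of $U^f$; hence $E\cong K(\c)$.

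Finally, the three families in the statement are pairwise non-isomorphic because their socles are pairwise non-isomorphic (zero in the case of $K(\c)$, and of types $Rw$ and $R/Rf(c)$ for different $f$'s in the remaining cases). The subtle point that I expect to be the main obstacle is the implication $\Soc(E)=0\Rightarrow JE=0$: it rests crucially on $J$ being the socle of ${}_RR$, hence a semisimple left $R$-module. Note that the converse implication is actually false — for example, $JU^f=0$ while $\Soc(U^f)\neq 0$ — so the dichotomy must be phrased in terms of $\Soc(E)$ and not of $JE$.
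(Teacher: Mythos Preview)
Your argument is correct, and it is genuinely different from the paper's proof. The paper works ``from above'': it uses the hereditary property via Lemma~\ref{lemma:imaggini} to realise every indecomposable injective $I$ as a quotient of $E({}_RR)=E(Rc^*)\oplus E(Rd^*)\oplus E(Rw)$, and then invokes the explicit structural results of Section~2 (in particular Corollary~\ref{cor:finale}, which rests on the computation of $E(Rc^*)$ and on Proposition~\ref{prop:LaurentSeries}) to reduce to images of $E(Rw)$ or of $K(\c)$. Your proof works ``from below'' via the socle dichotomy: the single observation that $J=\Soc({}_RR)$ is semisimple, so that $Jx\neq 0$ forces $\Soc(E)\neq 0$, lets you pass directly to $R/J=K[\c,\c^{-1}]$ via Lemma~\ref{injsuRsuRJ} and Matlis' Theorem without ever touching $E(Rc^*)$ or the Laurent-series analysis. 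The upshot is that your route is shorter and more elementary for the classification statement itself, while the paper's route yields additional information (the explicit description of $E(Rc^*)$ and the identification of $E(Rc^*)^b/\bigoplus_{j\geq 1}E(Rd^*(c^*)^j)$ with $K((\c))$) that your argument neither needs nor recovers.
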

\begin{proof}
Let $I$ be an injective indecomposable left $R$-module. By Lemma~\ref{lemma:imaggini} there exists an epimorphism
\[\xymatrix{E({}_RR)=E(Rc^*)\oplus E(Rd^*)\oplus E(Rw)\ar[rr]^-\varphi&& I.}\]
 
 If $\varphi(E(Rw))$ (or $\varphi(E(Rd^*))$) is not 0, then it is an injective  $R$-submodule of $I$ and hence, being $I$ indecomposable, we get $\varphi(E(Rw))=I$ (or $\varphi(E(Rd^*))=I$): in both the cases $I$ is an homomorphic image of $E(Rw)\cong E(Rd^*)$ (Case 1). 

If, on the other hand, $\varphi(E(Rw))=0=\varphi(E(Rd^*))$, then $\varphi(E(Rc^*))=I$.
%
%
If the restriction of $\varphi$ to $\bigoplus_{j\geq 1} E(Rd^*(c^*)^j)$ is not zero, then there exists an epimorphism  \[\xymatrix{E(Rw)\cong E(Rd^*(c^*)^{\overline j})\ar@{->>}[r]&I}\] where $\overline j$ is a suitable integer $\geq 1$ and we return to the previous Case 1. Otherwise, $\varphi$ induces an epimorphism 
\[\xymatrix{E(Rc^*)/\bigoplus_{j\geq 1} E(Rd^*(c^*)^j)\ar@{->>}[rr]&&I}.\]
In such a case, by Corollary~\ref{cor:finale}, we conclude that $I$ is an homomorphic image of the  $R$-module $K(\c)$ (Case 2).

\noindent Let us study the indecomposable images of $E(Rw)$ (Case 1) and of $K(\c)$ (Case 2).

 Case 1. $\xymatrix{E(Rw)\ar@{->>}[r]^-\varphi &I}$. If $\varphi$ is injective, then $I$ is isomorphic to  $E(Rw)$. Assume $\Ker\varphi\not=0$.
Since $Rw$ is simple and essential in $E(Rw)$, it is contained in $\Ker\varphi$. It is easy to check that
\[Rw=(Rw\oplus\bigoplus_{i\geq 0}Rd^*(c^*)^i)\cdot\{k_{-1}w+\sum_{i\geq 0}k_ic^id\mid k_i\in K\}=J\cdot E(Rw).\]
Thus $I$ is an indecomposable injective left $R$-module such that $J\cdot I=0$. Therefore $I$ is an indecomposable injective $R/J=K[\c,\c^{-1}]$-module and hence it is isomorphic to either a Pr\"ufer module associated to an irreducible polynomial $f(x)\in K[x]$ with $f(0)=1$, or to $K(\c)$. 


Case 2. $\xymatrix{K(\c)\ar@{->>}[r]^-\varphi &I}$.  If $\varphi $ is injective, then $I$ is isomorphic to  $K(\c)$. Otherwise  $I$ has $K[\c,\c^{-1}]$-torsion elements and hence it is isomorphic to a Pr\"ufer module $U^f$ associated to an irreducible polynomial $f(x)\in K[x]$ with $f(0)=1$. 
%
%
%
\end{proof}


We conclude the paper restating Theorem~\ref{thm:main} in terms of modules which are explicitly described as modules over the Jacobson algebra $K\langle X,Y\mid XY=1\rangle$. The socle of the Jacobson algebra is the two-sided ideal generated by $1-YX$; setting $\overline X:=X+\langle 1-YX\rangle$ in the quotient $K\langle X,Y\mid XY=1\rangle/\langle 1-YX\rangle$, we have $Y+\langle 1-YX\rangle={\overline X}^{-1}$ and $K\langle X,Y\mid XY=1\rangle/\langle 1-YX\rangle$ is equal to the ring of Laurent polynomials $K[\overline X, {\overline X}^{-1}]$.

\begin{corollary}
The complete list of indecomposable injective left $K\langle X,Y\mid XY=1\rangle$-modules is:
\[K[[Y]](1-YX),\quad K(\overline X),\quad  \text{and}\quad \varinjlim K[\overline X, {\overline X}^{-1}]/f^n(\overline X)K[\overline X, {\overline X}^{-1}]\]
for each irreducible polynomial $f(x)$ in $K[x]$ with $f(0)=1$.
\end{corollary}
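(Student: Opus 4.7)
The plan is to derive the corollary as a direct translation of Theorem~\ref{thm:main} through the explicit isomorphism between $L_K(\mathcal T)$ and $K\langle X,Y\mid XY=1\rangle$ recorded just after the picture of $\widehat{\mathcal T}$, namely $v\mapsto YX$, $w\mapsto 1-YX$, $c\mapsto Y^2X$, $c^*\mapsto YX^2$, $d\mapsto Y-Y^2X$, $d^*\mapsto X-YX^2$.

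First I would translate $E(Rw)$. A short induction using $XY=1$ gives $c^i\mapsto Y^{i+1}X$, and hence
\[c^id \;\mapsto\; Y^{i+1}X(Y-Y^2X) \;=\; Y^{i+1}(XY) - Y^{i+1}(XY)(YX) \;=\; Y^{i+1}(1-YX).\]
Together with $w\mapsto 1-YX$, the description $E(Rw)=\{k_{-1}w+\sum_{i\ge 0}k_ic^id\mid k_i\in K\}$ recalled in the preliminaries becomes exactly $\left\{\left(\sum_{j\ge 0}k_{j-1}Y^j\right)(1-YX):k_{j-1}\in K\right\}=K[[Y]](1-YX)$.

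Next I would identify the quotient structure: $J=\langle w\rangle$ maps to $\langle 1-YX\rangle$, so in the quotient both $XY=1$ and $YX=1$ hold and we obtain the Laurent polynomial ring $K[\overline X,\overline X^{-1}]$. Because $\bar c=\overline{Y^2X}=\overline Y\cdot \overline{YX}=\overline Y=\overline X^{-1}$, the field $K(\bar c)$ coincides with $K(\overline X)$.

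Finally, for the Pr\"ufer family $U^f=\varinjlim K[\bar c,\bar c^{-1}]/K[\bar c,\bar c^{-1}]f(\bar c)^n$ from Remark~\ref{rem:K(C)}, substituting $\bar c=\overline X^{-1}$ and multiplying by the unit $\overline X^{n\deg f}$ rewrites the ideal generator $f(\bar c)^n$ as $\tilde f(\overline X)^n$, where $\tilde f(x)=x^{\deg f}f(1/x)$. Normalizing $\tilde f$ by its non-zero constant term (which equals the leading coefficient of $f$) produces an irreducible polynomial $g\in K[x]$ with $g(0)=1$, and the assignment $f\mapsto g$ is an involution on the set of irreducible polynomials with constant term $1$. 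The main (and only) obstacle is this last bookkeeping step: one must check that $f\leftrightarrow g$ is a bijection of the indexing set, which guarantees that the list in the corollary---indexed by irreducible $f$ with $f(0)=1$---is precisely the family of translated Pr\"ufer modules coming from Theorem~\ref{thm:main}.
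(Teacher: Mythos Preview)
Your proposal is correct and follows the same route as the paper: translate Theorem~\ref{thm:main} through the explicit isomorphism, with the only nontrivial computation being the identification $(Y^2X)^i(Y-Y^2X)=Y^{i+1}(1-YX)$ that turns $E(Rw)$ into $K[[Y]](1-YX)$. The paper treats the $K(\overline X)$ and Pr\"ufer translations as not requiring detail, whereas you spell out the $f\leftrightarrow g$ involution on the indexing set; this extra bookkeeping is correct but not strictly necessary, since the Pr\"ufer family in both statements is simply the complete list of non-field indecomposable injectives over the \emph{same} Laurent ring $R/J\cong K[\overline X,\overline X^{-1}]$, parameterized by its maximal ideals.
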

\begin{proof}
We get the description of the indecomposable injective modules over the Jacobson algebra applying the isomorphism between $L_K(\mathcal T)$ and $K\langle X,Y\mid XY=1\rangle$ described at page~\pageref{isoJac}. Only how the first injective indecomposable module was obtained requires some further detail. Since $w\mapsto 1-YX$, $c\mapsto Y^2X$, and $d\mapsto(Y-Y^2X)$ the left $L_K(\mathcal T)$-module $E(Rw)=Kw+[[c]](Kd)$ corresponds to
$K(1-YX)+[[Y^2X]](K(Y-Y^2X))$. A generic element in $K(1-YX)+[[Y^2X]](K(Y-Y^2X))$ has the form
\[k_{-1}(1-YX)+k_0(Y-Y^2X)+k_1 Y^2X(Y-Y^2X)+\cdots+k_i (Y^2X)^i(Y-Y^2X)+\cdots\]
For each $i\geq 0$, we have
\[(Y^2X)^i(Y-Y^2X)=\underbrace{Y^2X\cdots Y^2X}_{i\text{ times}}Y(1-YX)=Y^{i+1}(1-YX).\]
Therefore $K(1-YX)+[[Y^2X]](K(Y-Y^2X))=K[[Y]](1-YX)$.
\end{proof}

\textbf{Acknowledgement} The authors are deeply grateful to the anonymous referee for the care with which our work was reviewed and for her/his valuable suggestions.

\end{document}